\documentclass[a4paper,11pt]{amsart}
\usepackage[english]{babel}
\usepackage{amsmath,amssymb,amsthm,mathrsfs,amsopn,amscd}

\usepackage{verbatim,graphicx,color}
\usepackage[utf8]{inputenc}
\usepackage{indentfirst}
\usepackage[T1]{fontenc}
\usepackage{fancyhdr}
\usepackage{color}
\usepackage[usenames,dvipsnames,svgnames,table]{xcolor}

\newtheorem{thm}{Theorem}[section]
\newtheorem{lem}[thm]{Lemma}
\newtheorem{pro}[thm]{Proposition}

\newtheorem{oss}[thm]{Remark}

\newtheorem*{mt}{Main Theorem}

\newenvironment{example}[1][Example.]{\begin{trivlist}
\item[\hskip \labelsep {\bfseries #1}]}{\end{trivlist}}
\newenvironment{remark}[1][Remark.]{\begin{trivlist}
\item[\hskip \labelsep {\bfseries #1}]}{\end{trivlist}}

\def\R{\mathbb R}

\def\pa{\partial}

%\newcomman{\sL}{\mathscr L}

\newcommand{\sign}{\mathrm{sign\ }}

%{\mathrm{Core}}

\title[Weighted perimeters and applications]{On a class of weighted Gauss-type isoperimetric inequalities and
applications to symmetrization}

\author[Marini]{Michele Marini}
\address{Scuola Normale Superiore, Piazza dei Cavalieri 7, 56126 Pisa, Italy}
\email{michele.marini@sns.it}

\author[Ruffini]{Berardo Ruffini}
\address{Institut Fourier, 100 rue des maths, BP 74, 38402 St Martin d'Hères cedex, France}
\email{berardo.ruffini@ujf-grenoble.fr}

\keywords{Weighted isoperimetric inequalities, symmetrizations, rearrangements.}
\subjclass[2010]{26D10, 35J15, 26D20}
\begin{document}

\begin{abstract}
We solve a class of weighted isoperimetric problems of the form 
\[
 \min\left\{\int_{\partial E}w e^V\,dx:\int_E e^V\,dx={\rm constant}\right\}
\]
where $w$ and $V$ are suitable functions on $\R^d$. As a consequence, we prove a comparison result for
the solutions of degenerate elliptic equations.
\end{abstract}

\maketitle

\section{Introduction}

In the celebrated paper \cite{T}, G. Talenti established several comparison results between the solutions of the
Poisson equation with Dirichlet boundary condition (with suitable data $f$ and $E$): 
\begin{equation}\label{problematalenti}
         -\Delta u=f  \mbox{ in $E$},\qquad        u=0  \mbox{ on $\pa E$}
\end{equation}
and the solutions of the corresponding problem where $f$ and $E$ are replaced by their spherical
rearrangements (see \cite[Chapter 3]{LL} for the definition and main properties of spherical rearrangement).
Precisely, he proves that if we denote by $v$ the solution
of the
problem with symmetrized data, then the rearrangement $u^*$ of the (unique) solution $u$ of \eqref{problematalenti}
is
pointwise bounded by $v$. Moreover he shows that the $L^q$ norm of $\nabla u$ is bounded, as well, by the $L^q$
norm of $\nabla v$, for $q\in(0,2]$. The proof of these facts  basically relies on two ingredients: the
Hardy-Littlewood-Sobolev
inequality and the isoperimetric inequality (see \cite{amfupa} and \cite{LL} for comprehensive
accounts on the
subjects). 

Later on, following such a scheme, many other works have been developed to prove analogous comparison results
related to the solutions of PDEs involving different kind of operators, see for instance
\cite{bbmp1,bbmp2,bbmp3,bcm1,bcm2,bla,blafeopos,cinesi} and the references therein. A recurring idea in these works is, roughly speaking, the following:
the operator considered is usually linked to a sort of {\em weighted perimeter}. Thus initially it is necessary 
to
solve a corresponding isoperimetric problem; then
% , using an appropriate Hardy-Littlewood-Sobolev type
% inequality,
the desired comparison results can be obtained following the ideas contained
in \cite{T}.

    \noindent
 For example in
\cite{bbmp2} the authors consider a class of weighted perimeters of the form
\[
 P_w(E)=\int_{\partial E}w(|x|)\,d\mathcal{H}^{d-1}(x),
\]
where $E$ is a set with Lipschitz boundary and $w:\R\to[0,\infty)$ a non-negative function, and prove, under
suitable convexity
assumptions on the weight $w$, that the ball centered at the origin is the unique solution of the mixed
isoperimetric problem
\[
 \min\{P_w(E):|E|={\rm constant}\}
\]
where $|\cdot|$ denotes the $d$-dimensional Lebesgue measure. As a consequence they prove comparison
results, analogous to those considered by Talenti in \cite{T}, for the solutions
of 
\[
-{\rm div}(w^2\nabla u)=f  \mbox{ in $E$},\qquad        u=0  \mbox{ on $\pa E$}.
\]
\noindent
Recently in \cite{bdr}, L. Brasco, G. De Philippis and the second author proved a quantitative
version of the weighted isoperimetric inequality considered in \cite{bbmp1}. Their proof is achieved
by means of a sort of {\em calibration technique}. One advantage of this technique is that it is adaptable
to other kind of problems, as that of considering other kind of functions in the weighted perimeter (e.g. 
Wulff-type weights, see \cite{bf}), or that of considering different measured spaces, as $\R^d$ endowed with
the Gauss measure.
\medskip

% {\color{violet} {\tt qui non mi piace come si stacca il discorso}}
\noindent
In this paper we consider degenerate elliptic equations with Dirichlet boundary condition of
the form
\begin{equation}\label{problem}
         -\mathrm{div}(w^2\,e^{V}\nabla u)=f\,e^{V} \, \mbox{ in $E$},\qquad u=0 \mbox{ on $\pa E$}
\end{equation}
where $w$ and $V$ are two given functions, and we aim to prove 
analogous comparison results as those in \cite{T}. The particular form in which is written the measure $e^V$ is
due to the later applications, whose main examples are Gauss-type measures, that is $V(x)=-c|x|^2$. Bearing in
mind this instance, we consider a class of mixed isoperimetric problems of the form
\[
 \min\left\{P_{we^V}(E):\int_E e^{V}={\rm constant}\right\} 
\]
and prove, by means of a calibration technique reminiscent of that developed in \cite{bdr}, that the solutions,
under suitable
assumptions on $V$ and $w$, are half-spaces, see Proposition \ref{semispazides} and Theorem \ref{brasco0}.
Then, using a suitable concept of rearrangement related to the measures considered, 
we
prove, in the Main Theorem in Section \ref{main}, comparison results between the solutions of
\eqref{problem} and the solutions of the same
equation with rearranged data.

\section{Preliminaries on rearrangement inequalities}\label{prerequisiti}

In this section we introduce the main definitions and properties about the concept of symmetrization and
rearrangement we shall make use of. 
% In fact we shall replace the spherical symmetry of the rearranged sets and functions
% by a planar symmetry (for the classical case we refer to \cite{LL}).

\noindent
Let $\mu$ be a finite Radon measure on $\R^d$, a \emph{right rearrangement} with respect to $\mu$ 
% is a volume
% preserving mapping from the
% collection of Borel sets of $\R^d$ to the set of the open right half-spaces of the form
% $\{(x_1,x')\in\R\times\R^{d-1}\,:\,x_1> t\}$, 
is defined, for any Borel set $A$, as
% 
% when $d\mu=fdx$, for some positive and measurable function $f$, the value of $t$ is uniquely determined,
% otherwise we define the right rearrangement of the Borel set $A$ as
\[
R_A^\mu=\{(x_1,x')\in\R\times\R^{d-1}\,:\,x_1> t_A\},
\]
where $t_A=\inf\left\{t\,: \mu(A)=\mu(\{(x_1,x')\in\R\times\R^{d-1}\,:\,x_1> t\})\,\right\}$. Notice that if 
$d\mu=fdx$, for some positive and measurable function $f$, then the value of $t$ is uniquely determined.\\
% To rearrange a function we have just to perform a rearrangement of its level sets, 
% more precisely, 
\noindent
Given a non-negative Borel function $f:\R^d\to[0,+\infty)$, we call \emph{right increasing rearrangement} of $f$
the
function $f^{*\mu}$ given by
\[
f^{*\mu}(x)=\int_0^{+\infty}\chi_{R^\mu_{\{f>t\}}}(x)\,dt
\]
where $\chi_A$ is the characteristic function of the set $A$. As an aside we notice that the right increasing
rearrangement of the characteristic function of a Borel set 
$A$ coincides with the characteristic function of $R_A^\mu$. Clearly $f^{*\mu}$ is non-negative, increasing with
respect to
the first variable $x_1$, and constant on the sets $\{(x_1,x')\in\R\times\R^{d-1}:x_1=t\}$, for
$t\in\R$.
Moreover $f$ and $f^{*\mu}$ share the same distribution function:
\[
\mu_f(t):=\mu(\{f>t\})=\mu(\{f^{*\mu}>t\})=\mu_{f^{*\mu}}(t).
\]
% We stress that we may define the right rearrangement also for function which change sign just by
% $f^{*\mu}:=|f|^{*\mu}$ but for the scopes of thi 
We furthermore define $f^{\star\mu}:\R^+\rightarrow\R^+$ as the smallest decreasing function satisfying
$f^{\star\mu}(\mu_f(t))\geq t$; in other words
\[
f^{\star\mu}(s)=\inf\{t>0\,:\, \mu_f(t)<s\}.
\]
It is useful to bear in mind that $\{s:f^{\star\mu}(s)>t\}=[0,\mu_f(t)]$ so that by the Layer-Cake Representation
Theorem (see for instance \cite{LL}) we have 
\begin{equation}\label{symlc}
\int_0^{\mu(\{x_1>t\})}f^{\star\mu}(s)\,ds=\int_{t}^\infty \mu_f(s)\,ds=\int_{\{x_1>t\}} f^{*\mu}(x)\,dx.
\end{equation}

\noindent
We conclude this section by proving the {\it Hardy-Littlewood}
rearrangement inequality related to the right symmetrization.
\begin{lem}[Hardy-Littlewood rearrangement inequality]\label{hardy}
Let $f$ and $g$ be non-negative Borel functions from $\R^d$ to $\R$. Then for any non-negative Borel measure
$\mu$ we have
\[
\int_{\R^d}f\,g\,d\mu\leq\int_{\R^d}f^{*\mu}g^{*\mu}d\mu.
\]
\end{lem}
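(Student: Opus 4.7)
The natural approach is the standard layer-cake argument, with the key simplification that right rearrangements produce nested half-spaces.

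The plan is to first write each function as an integral of its level-set indicators:
\[
f(x) = \int_0^\infty \chi_{\{f>s\}}(x)\,ds, \qquad g(x) = \int_0^\infty \chi_{\{g>t\}}(x)\,dt,
\]
and apply Fubini to obtain
\[
\int_{\R^d} fg\, d\mu = \int_0^\infty \int_0^\infty \mu\bigl(\{f>s\}\cap\{g>t\}\bigr)\,ds\,dt,
\]
with the analogous identity holding for $f^{*\mu}$ and $g^{*\mu}$. Thus the lemma reduces to the pointwise (in $(s,t)$) inequality
\[
\mu\bigl(\{f>s\}\cap\{g>t\}\bigr) \leq \mu\bigl(\{f^{*\mu}>s\}\cap\{g^{*\mu}>t\}\bigr).
\]

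The crucial observation is that by definition $\{f^{*\mu}>s\}=R^\mu_{\{f>s\}}$ and $\{g^{*\mu}>t\}=R^\mu_{\{g>t\}}$ are both right half-spaces of the form $\{x_1>\tau\}$, hence totally ordered by inclusion. Consequently
\[
\mu\bigl(R^\mu_{\{f>s\}}\cap R^\mu_{\{g>t\}}\bigr) = \min\bigl(\mu_f(s),\,\mu_g(t)\bigr),
\]
while for the original sets one only has the trivial bound
\[
\mu\bigl(\{f>s\}\cap\{g>t\}\bigr) \leq \min\bigl(\mu(\{f>s\}),\,\mu(\{g>t\})\bigr) = \min\bigl(\mu_f(s),\,\mu_g(t)\bigr),
\]
using that $f$ and $f^{*\mu}$ are equi-distributed. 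Combining these two facts gives the required pointwise inequality, and integrating over $s$ and $t$ concludes the proof.

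There is essentially no obstacle: once one recognizes that the right rearrangements of two sets are automatically comparable (a feature absent for Schwarz symmetrization, where one usually needs the bathtub principle), the classical Hardy--Littlewood argument goes through verbatim. The only mild points to check are the measurability of $(s,t)\mapsto \mu(\{f>s\}\cap\{g>t\})$, justifying Fubini, and the observation that $\mu$ need not be absolutely continuous for the equidistribution identity $\mu_f=\mu_{f^{*\mu}}$ used above, which is built into the definition of $f^{*\mu}$ via $t_A$.
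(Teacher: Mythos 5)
Your proof is correct and follows essentially the same route as the paper's: a layer-cake decomposition plus Fubini reduces the inequality to the pointwise bound $\mu(\{f>s\}\cap\{g>t\})\le\min(\mu_f(s),\mu_g(t))$, and the key observation in both arguments is that right rearrangements yield nested half-spaces, so the minimum on the right is realized exactly as the measure of the intersection of the rearranged level sets.
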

\begin{proof}
We have
\[
\begin{aligned}
\int_{\R^d}f\,g\,d\mu&=\int_{\R^d}\int_0^\infty\int_0^\infty \chi_{\{f>t\}}(x)\chi_{\{g>s\}}(x)\,dt\,ds\,d\mu(x)\\
&=\int_0^\infty\int_0^\infty\int_{\R^d}
\chi_{\{f>t\}\cap \{g>s\}}(x)\,d\mu(x)\,dt\,ds\\
&=\int_0^\infty\int_0^\infty\mu(\{f>t\}\cap\{g>s\})\,dt\,ds
\\
&\le\int_0^\infty\int_0^\infty\min(\mu(\{f>t\}),\,\mu(\{g>s\}))\,dt\,ds
\\&
=\int_0^\infty\int_0^\infty\min(\mu(\{f^{*\mu}>t\}),\,\mu(\{g^{*\mu}>s\}))\,dt\,ds
\\
&=\int_0^\infty\int_0^\infty\mu(\{f^{*\mu}>t\}\cap\{g^{*\mu}>s\})\,dt\,ds=
\int_{\R^d}f^{*\mu}\,g^{*\mu}\,d\mu,
\end{aligned}
\]
where we used the fact that $\{f^{*\mu}>t\}$ and $\{g^{*\mu}>s\}$ are half-spaces of the form
$\{(x_1,x')\in\R\times\R^{d-1}:x_1>r\}$ for some $r\in\R$ and so
\[
 \min(\mu(\{f^{*\mu}>t\}),\,\mu(\{g^{*\mu}>s\}))=\mu(\{f^{*\mu}>t\}\cap\{g^{*\mu}>s\}).
\]

\end{proof}
\begin{remark}
Setting $g=\chi_A$ in Lemma \ref{hardy} and thanks to \eqref{symlc} we get
\begin{equation}\label{hardy2}
\int_A f\,dx\leq \int_{R_A^\mu}f^{*\mu}(x)\,dx=\int_0^{\mu(A)}f^{\star\mu}(s)\,ds.
\end{equation}

\end{remark}

%\begin{lem}\label{ps}
 %Given a function $u\in L^1(A,we^V)$, where $A$ is a Borel subset of $\R^d$ we have that
 %\[
 % \int_A |\nabla u|^2\,d\mu\ge \int_A |\nabla u^{*\mu}|^2\,d\mu.
 %\]
%\end{lem}

%
%%
%%%
%%%%
%%%%
%%%%%
%%%%%%
%%%%%%%
%%%%%%%%
%%%%%%%%%
%%%%%%%%%
%%%%%%%%%%%
%%%%%%%%%%%%%
%%%%%%%%%%%%%%
%%%%%%%%%%%%%%%
%%%%%%%%%%%%%%%%
%%%%%%%%%%%%%%%%%
%%%%%%%%%%%%%%%%%%
%%%%%%%%%%%%%%%%%%%
%%%%%%%%%%%%%%%%%%%%
%%%%%%%%%%%%%%%%%%%%%
%%%%%%%%%%%%%%%%%%%%%%
%%%%%%%%%%%%%%%%%%%%%%%
%%%%%%%%%%%%%%%%%%%%%%%%
%%%%%%%%%%%%%%%%%%%%%%%%
%%%%%%%%%%%%%%%%%%%%%%%%%
%%%%%%%%%%%%%%%%%%%%%%%%%%
%%%%%%%%%%%%%%%%%%%%%%%%%%
%%%%%%%%%%%%%%%%%%%%%%%%%%%
%%%%%%%%%%%%%%%%%%%%%%%%%%%%
%%%%%%%%%%%%%%%%%%%%%%%%%%%%%
%%%%%%%%%%%%%%%%%%%%%%%%%%%%%%
\section{A class of weighted isoperimetric inequalities}

Given a measurable function $V:\R^d\to \mathbb R$ we denote by $\mu[V]$ the absolutely continuous measure whose
density equals $e^ V$, that is, for any measurable set $E\subset\R^d$ 
\[
  \mu[V](E)=\int_E e^{V(x)} dx;
\]
in what follows with the scope of simplifying the notation, and if there is no risk of confusion, we will
drop the dependence of $V$, writing $\mu$ instead of $\mu[V]$.
Moreover we will often adopt the notation $x=(x_1,x')\in\R\times\R^{d-1}$ and denote by $R_A$ instead of
$R_A^{\mu[V]}$ the right rearrangement of $A$ with respect to the measure $\mu[V]$. Given a Borel {\it
weight} function $w:\mathbb R\to\mathbb [0,+\infty]$ we define, for any open set $A$ with Lipschitz boundary, the
following concept of {\em weighted perimeter}:
\[
P_{w,V}(A)=\int_{\pa A}w(x_1)e^{V(x)}d\mathcal H^{d-1}(x).
\]
In the following proposition we show that, under suitable conditions on $w$ and $V$, the half-spaces of the form
$\{(x_1,x'):x_1>t\}$ are
the only minimizers of the weighted perimeter among the sets of fixed volume with respect to the measure
$\mu[V]$.

\begin{pro}\label{semispazides}
Let $A\subset\R^d$ be a set with Lipschitz boundary. Suppose that $w : \mathbb R\to\mathbb R^+$ and
$V:\R^d\to\mathbb R$ are
$C^1$-regular functions
satisfying the following assumptions:
\begin{itemize}
\item[{\it(i)}] $\mu(A)=\mu(R_A)<+\infty$,
% , $P_{w,V}(A)$, $\mu(R_A)$ and $P_{w,V}(R_A)$ are finite quantities;\\
\item[{\it (ii)}] the function $\pa_1 V(x)$ depends only on $x_1$ and  $g(x):=-w'(x_1)-w(x_1)\pa_1 V(x)$ is a
non-negative decreasing function on the
real line.
% \item[{\it (iii)}] for any $t\in\R$ we have that the integrals 
% \[
% \mu(\{x_1\ge t\})=\int_{\{x_1\ge t\}}e^V\,dx<+\infty\quad \mbox{and} \quad\int_{\{x_1\ge t\}}we^V\,dx<+\infty
% \]
% \medskip
% converge. 
\end{itemize}
Then 
\begin{equation}\label{disprincipale}
P_{w,V}(A)\geq P_{w,V}(R_A).
\end{equation}
% where $R_A^\mu$ is the right half-space $R_A=\{(x_1,x')=:x_1>t_A\}$  which satisfies
% \[
%  \mu(R_A)=\mu(A).
% \]
\end{pro}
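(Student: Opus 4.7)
The plan is to prove \eqref{disprincipale} by a calibration argument in the spirit of \cite{bdr}. The key object is the vector field
\[
X(x) := -w(x_1)\,e^{V(x)}\,e_1.
\]
Since by assumption (ii) the quantity $\partial_1 V$ depends only on $x_1$, a direct computation gives
\[
\dvr X(x) = -w'(x_1)\,e^{V(x)} - w(x_1)\,\partial_1 V(x)\,e^{V(x)} = g(x_1)\,e^{V(x)}.
\]
Moreover, $|X(x)| = w(x_1) e^{V(x)}$ (as $w \geq 0$), so pointwise on $\partial A$ one has $|X\cdot \nu_A| \leq w(x_1) e^{V(x)}$, with \emph{equality} when $\nu_A = \pm e_1$.

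Applying the divergence theorem to $A$ (reducing, if necessary, to $A\cap B_R$ and passing to the limit using $\mu(A) < \infty$), the pointwise bound above yields
\[
\int_A g(x_1)\,e^{V(x)}\,dx = \int_{\partial A} X\cdot\nu_A\,d\mathcal{H}^{d-1} \leq P_{w,V}(A).
\]
Applying it instead on $R_A = \{x_1 > t_A\}$, whose outward unit normal on $\{x_1=t_A\}$ is $-e_1$, one has $X\cdot \nu_{R_A} = w(x_1) e^{V(x)}$ identically, so the inequality becomes an equality:
\[
\int_{R_A} g(x_1)\,e^{V(x)}\,dx = P_{w,V}(R_A).
\]

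It thus suffices to prove $\int_A g\,d\mu \geq \int_{R_A} g\,d\mu$. Decomposing $A = (A\cap R_A)\cup(A\setminus R_A)$ and similarly for $R_A$, the hypothesis $\mu(A) = \mu(R_A)$ forces $\mu(A\setminus R_A) = \mu(R_A\setminus A)$. Since $g$ is non-negative and decreasing, $g(x_1)\geq g(t_A)$ on $A\setminus R_A \subset \{x_1\leq t_A\}$ and $g(x_1) \leq g(t_A)$ on $R_A\setminus A \subset \{x_1 > t_A\}$, so
\[
\int_{A\setminus R_A} g\,d\mu \;\geq\; g(t_A)\,\mu(A\setminus R_A) \;=\; g(t_A)\,\mu(R_A\setminus A) \;\geq\; \int_{R_A\setminus A} g\,d\mu.
\]
Adding $\int_{A\cap R_A} g\,d\mu$ to both sides and chaining the three displays proves \eqref{disprincipale}.

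The conceptual heart of the proof is the observation that $X = -we^V e_1$ \emph{calibrates} the half-spaces normal to $e_1$: the Cauchy-Schwarz-type inequality $|X\cdot\nu|\leq |X|$ saturates precisely on such boundaries, which is why they end up as the minimizers. The only genuine technical point to watch is the justification of the divergence theorem when $A$ is unbounded; the finiteness of $\mu(A)$ together with the $C^1$ regularity of $w,V$ handles this via exhaustion, provided one checks that the flux of $X$ through $\partial B_R$ vanishes as $R\to\infty$.
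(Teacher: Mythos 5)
Your proof is correct and follows essentially the same route as the paper: the same calibrating vector field $X=-w(x_1)e^{V}e_1$, the same divergence computation giving $g\,e^{V}$, the same divergence-theorem step yielding $\int_A g\,d\mu\le P_{w,V}(A)$ with equality for $R_A$, and the same reduction to comparing $\int_A g\,d\mu$ with $\int_{R_A}g\,d\mu$ via the symmetric difference and monotonicity of $g$. The only cosmetic difference is that the paper subtracts $g(t_Ae_1)$ from both integrands before applying monotonicity (which also extracts the quantitative remainder $\int_{A\Delta R_A}|g-g(t_Ae_1)|\,d\mu$), whereas you chain the one-sided bounds through $g(t_A)\mu(A\setminus R_A)=g(t_A)\mu(R_A\setminus A)$ directly; both are equivalent once finiteness is in hand.
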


\begin{proof}
We start by noticing that if $P_{w,V}(A)=+\infty$ there is nothing to prove. Hence we can suppose that
\begin{equation}\label{perimetrofinito}
 P_{w,V}(A)<+\infty.
\end{equation}
Let $e_1=(1,0,\dots,0)\in\R^d$ and consider the vector field $-e_1w(x_1)e^{V(x)}$. Its divergence is given by
\[
\mathrm{div} (-e_1w(x_1)e^V(x))=(-w'(x_1)-w(x_1)\pa_1 V(x))e^{V(x)}=g(x)e^{V(x)}.
\]
By an application of the Divergence Theorem we have 
\begin{equation}\label{perimestimate}
\begin{aligned}
\int_Ag(x)d\mu(x)&=\int_{A}\mathrm{div} (-e_1w(x_1)e^{V(x)})dx\\
&=\int_{\pa A}w(x_1)e^{V(x)}\langle\nu_A(x),-e_1\rangle d\mathcal H^{d-1}(x)\\
&\le\int_{\pa A}w(x_1)e^{V(x)}d\mathcal H^{d-1}(x)=P_{w,V}(A),
\end{aligned}
\end{equation}
\noindent
where $\nu_A(x)$ is the outer unit normal to $\partial A$ at $x$.
Let $t_A$ be a real number such that the right half-space $R_A=\{(x_1,x'):x_1\ge t_A\}$ satisfies
$\mu(R_A)=\mu(A)$. Then, since the outer normal of $R_A$ is the constant vector field $-e_1$, 
the
inequality in \eqref{perimestimate} turns into an equality 
if we replace  $A$ with $R_A$. Notice that by condition $(ii)$ and \eqref{perimestimate} we have
\[
 P_{w,V}(R_A)=\int_{R_A\setminus A}g\,d\mu+\int_{R_A\cap A}g\, d\mu\le g(t_A)\mu(A)+P_{w,V}(A).
\]
Thanks to assumption $(i)$ and \eqref{perimetrofinito} such quantities are finite and so we get
\[
P_{w,V}(A)-P_{w,V}(R_A)\geq\int_Ag(x)d\mu(x)-\int_{R_A}g(x)d\mu(x).
\]
Since, by definition, $\mu(A)=\mu(R_A)<+\infty$ again by condition $(i)$ we obtain $\mu(A\setminus
R_A)=\mu(R_A\setminus A)<+\infty$.
Thus
\begin{equation}\label{finito}
\begin{aligned}
\int_Ag(x)&d\mu(x)-\int_{R_A}g(x)d\mu(x)=\int_{A\setminus R_A}g(x)d\mu(x)-\int_{R_A\setminus
A}g(x)d\mu(x)\\
&=\int_{A\setminus R_A}(g(x)-g(t_Ae_1))d\mu(x)-\int_{R_A\setminus A}(g(x)-g(t_Ae_1))d\mu(x).
\end{aligned}
\end{equation}
Since every $x\in A\setminus R_A$ (respectively $x\in R_A\setminus A$) satisfies $\langle x,e_1\rangle<t_A$
(respectively $\langle x,e_1\rangle>t_A$), by condition {\it (ii)} we deduce
\begin{equation}\label{quantitativa}
\begin{aligned}
P_{w,V}(A)-P_{w,V}(R_A)&\ge\int_{A\setminus R_A}|g(x)-g(t_Ae_1)|d\mu(x)+\int_{R_A\setminus
A}|g(x)-g(t_Ae_1)|d\mu(x)
\\&=\int_{A\Delta R_A}|g(x)-g(t_Ae_1)|d\mu\geq 0,
\end{aligned}
\end{equation}
where $A\Delta R_A=(A\setminus R_A)\cup (R_A\setminus A)$ stands for the symmetric difference between $A$ and
$R_A$. This concludes the proof.
\end{proof}

\begin{oss}[Necessity of the assumptions]\rm
We stress that the integrability condition $(i)$ is necessary to  formulas \eqref{perimestimate} and \eqref{finito}
(and thus to our proof) to
work. 
%  To transform the statement of Proposition \ref{semispazides} in a proper isoperimetric problem, thus without
% asking integrability properties to the set $A$, it could be worth requiring  the whole space to have
% finite measure with respect
%  to $d\mu=e^Vdx$
% %  Unfortunately such a condition is seldom satisfied if we also require the functions $V$ and $w$
% % to satisfy condition $(ii)$, so it is worth restricting our attention to suitable subclass of problems 
% (see the
% Remark \ref{osservazionciona2} below). 

\noindent
Concerning condition $(ii)$, we note that it is needed just for technical reasons. 
% Nevertheless we stress that 
% the analogous results of that in Proposition \ref{semispazides} in Euclidean setting have been considered in
% \cite{bdr} and in in \cite{bbmp1}. In such papers, is
% basically equivalent to that
% given in \cite{bbmp1} (see \cite[Remark $2.3$]{bdr}), despite the deep differences in the two proofs. So it is
% likely to think that such condition is not far from optimality.
Nonetheless we stress that our proof offers a slightly stronger inequality than \eqref{disprincipale}. Indeed the
right-hand side of \eqref{quantitativa} may be seen as a modulus of continuity of the $L^1$ distance between $A$
and $R_A$. Thus it would be interesting to understand how much our hypotheses are far from optimality
(compare also with \cite[Remark $2.3$]{bdr}). 
\end{oss}

\begin{oss}[Equality cases]\rm
An inspection of the proof of Proposition \ref{semispazides}, and in particular of inequality
\eqref{perimestimate}, shows that if $w>0$, then we have equality in \eqref{disprincipale} only if $A$ is
equal to the half space $R_A$, up to set of
zero $d$-dimensional Lebesgue measure. On the other hand, if the set $\{w=0\}$ has positive Lebesgue measure, we
can not expect any kind of uniqueness for the equality cases of such an inequality. 
\end{oss}

\begin{example}\label{esempione}\rm
A non-trivial example fulfilling condition $(ii)$ of Proposition \ref{semispazides} is the
following
\[
V(x_1,x')=-c(x_1|x_1|+|x'|^2),\quad w(x_1)=e^{-ax_1},
\]
with $a,c>0$ constants satisfying $a^2-2c\ge0$. To prove this fact we initially observe that if $x_1\ne0$ such a
condition
% , where
% $w$ and $V$ are $C^2$ functions, 
is equivalent to require that
\begin{equation}\label{oder}
w''(x_1)+V_1''(x_1)w(x_1)+V_1'(x_1)w'(x_1)\ge0
\end{equation}
which turns out to be equivalent, in our example, to
\[
 a^2-2c+2ac|x_1|\ge0.
\]
Then, since $-w'(x_1)-w(x_1)\partial_1 V(x_1)$ is continuous in $x_1=0$,
condition
$(ii)$ is satisfied everywhere.

%\textcolor{red}{\noindent In the same spirit it is possible to prove that $V(x)=-c|x^2|$, $w(x_1)=e^{-a|x_1|}$,
%again if $a^2-2c>0$, is an admissible couple of functions for Proposition \ref{semispazides}.}
\end{example}
\bigskip
% \begin{oss}\label{osservazionciona}\rm
To transform inequality \eqref{disprincipale} into a well posed isoperimetric problem, it would be more advisable
 to eliminate the integrability hypothesis $(i)$ in Proposition \ref{semispazides} by requiring that the measure
 $\mu(\R^d)<+\infty$. This fact, together with ordinary differential inequality required in assumption $(ii)$, is
 seldom satisfied.  
%  Notice that this is not the case of the measure taken into account in the previous example
% is not finite on the whole $\R^d$.
% %\textcolor{red}{, while in the second, the function $w$ is not $C^1$.}  
 Hence, to get other instances of functions which fulfill
 inequality
  \eqref{oder} together with the integrability property {\it (i)} of Proposition
 \ref{semispazides} it is worth restricting our attention to the half-space
 \[
  \R^d_+=\{(x_1,x')\in\R\times\R^{d-1}: x_1>0\}.
 \]
 As an immediate corollary of Proposition \ref{semispazides} we get that 
the solution of the problem
%  \begin{equation}\label{errore}
%   \min\left\{P_{w,V}(A): \chi_A\in L^1(\R^d_+,we^V ), \pa A\,\,{\rm Lipschitz} , \int_Ae^V=c\right\}
%  \end{equation}
  \begin{equation}\label{errore}
   \min\left\{P_{w,V}(A): A\subseteq\R^d_+,\,\,\mu(A)=c, \,\,\pa A\,\,{\rm Lipschitz}\right\}
  \end{equation}
\noindent
is given by $R_c=\{x_1\ge t_c\}$ where $t_c$ is such that $\mu(R_c)=c$.\\

\begin{oss}\label{osservazionciona2}\rm
Notice that the non-mixed Gauss case, $w$ constant and $V(x)={-c|x|^2}$, is not covered by our hypotheses.
% A class
% of examples of particular interest once we restrict our attention to the half space $\R^d_+$ is given by
% $V(x)=-c|x|^2$ for $c>0$, that is $e^V\,dx$ corresponds to the
% (rescaled) Gauss
% measure. In
Nevertheless in this case examples of functions $w$ which satisfy the hypotheses of Proposition
\ref{semispazides} are given by $w(t)=t^{-a}$ with $a\ge1$ or $w(t)=b+e^{-at}$,  with $a,b\ge 0$ such that
$a^2-2c(1+b)>0$ (as can be easily seen reasoning as in the previous example). In the latter case at least if
$b=0$ we have that
\[
 we^V=e^{a^2/(4c)}\exp{\left(-c\left|x+{\bf e_1}\frac{a}{2c}\right|^2\right)},
\]
where ${\bf e_1}=(1,0,\dots,0)\in\R^d$, 
which can be rephrased\footnote{as suggested us by an anonymous Referee.}
as
the fact that
the solutions of the isoperimetric problem in the half-space $\R^d_+$ with
(suitable) mixed Gaussian conditions
\[
 \min\left\{P_{\gamma_{\sigma,\eta}}(E): \gamma_{\sigma,0}(E)={\rm constant},\,\, E\subseteq \R^d_+,\,\,\partial E
\,\,{\rm Lipschitz} \right\}
\]
are right-half spaces.  Here we
denoted by $\gamma_{\sigma,\eta}$ the normal distribution whose covariance matrix is
$\sigma \rm{Id}$ and whose mean vector $\eta$ is given 
by $\eta=-\frac{a}{2c}{\bf e_1}$. If $b\ne0$ the unique change is that the perimeter is weighted by means of the
sum of two Gaussian measures.
We recall that, as pointed out in the Introduction, similar problems related to the Gauss measure are considered in
\cite{bbmp3,bcm1,bla,blafeopos,cinesi}.
\end{oss}

Notice that we defined the perimeter $P_{w,V}$ only for sets with Lipschitz boundary, 
but for our later applications it will be useful to have a definition of perimeter which comprehends also less
regular
subsets of $\R^d$.
A measurable set $A$ is said to have locally finite (Euclidean) perimeter (we refer to \cite{M} for a complete
overview on the subject) if there exists a vector-valued Radon measure $\nu_A$ called \emph{Gauss--Green measure}
of the set $A$ such that, for every $T\in C_c^1(\R^d;\R^d)$, it holds true that
\[
\int_A \mathrm{div} T =\int_{\R^d}\langle T,d\nu_A\rangle.
\]
The perimeter of $A$ is defined in terms of the total variation of the Gauss--Green measure of $A$ as
$P(A)=|\nu_A|(\R^d)$.
For any set $A$ of locally finite perimeter we then define the \emph{weighted perimeter} $P_{w,V}$ by
\[
P_{w,V}(A)=we^V|\nu_A|(\R^d).
\]
Since when $A$ has Lipschitz boundary $|\nu_A|=\mathcal H^{d-1}\llcorner \pa A$, the above definition is
coherent
with the one given at the beginning of this section on such sets. \\

% \noindent
% In the following theorem we use an approximation argument to characterize minimizers of the weighted perimeter
% among all sets of locally finite 
% perimeter.

\begin{thm}\label{brasco0}
Let $w$ and $V$ non-negative and $C^1$-regular functions satisfying condition $(ii)$ of Proposition
\ref{semispazides}.  Suppose moreover that $\mu(\R_+^d)<+\infty$; then the problem
\[
\min\left\{P_{w,V}(A): A\subseteq\R^d_+,\,\,\mu(A)=c\right\}
\]
admits a solution, and this solution coincides with the one of \eqref{errore}.
\end{thm}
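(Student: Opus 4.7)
The strategy is to reduce Theorem~\ref{brasco0} to Proposition~\ref{semispazides} via an approximation procedure. Since $R_c$ itself lies in the admissible class $\{A\subseteq\R^d_+:\mu(A)=c\}$, it suffices to prove $P_{w,V}(A)\ge P_{w,V}(R_c)$ for every set $A\subseteq\R^d_+$ of locally finite perimeter with $\mu(A)=c$.

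First I would verify that $c\mapsto P_{w,V}(R_c)$ is continuous on $(0,\mu(\R^d_+))$. Since $e^V$ is strictly positive and continuous, the function $t\mapsto\mu(\{x_1>t\})$ is continuous and strictly decreasing, so $c\mapsto t_c$ is continuous; combined with the $C^1$ regularity of $w$ and $V$ and the identity $P_{w,V}(R_c)=w(t_c)\int_{\R^{d-1}}e^{V(t_c,x')}\,dx'$ (finite because $\mu(\R^d_+)<\infty$), this gives the continuity of the profile $c\mapsto P_{w,V}(R_c)$.

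Next, given an admissible $A$, I would construct a sequence $A_n\subseteq\R^d_+$ of open sets with smooth boundary such that $\chi_{A_n}\to\chi_A$ in $L^1(\R^d)$ and the total variations $|\nu_{A_n}|$ converge weakly as Radon measures to $|\nu_A|$. The classical recipe is to shift $A$ slightly inward in the $e_1$-direction, mollify its indicator by a standard kernel, and select smooth superlevel sets using Sard's theorem, extracting a diagonal subsequence. Dominated convergence (using $\mu(\R^d_+)<\infty$) then yields $\mu(A_n)\to\mu(A)=c$, while the continuity of $we^V$ paired with the weak convergence of the perimeter measures gives $P_{w,V}(A_n)\to P_{w,V}(A)$. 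Setting $c_n=\mu(A_n)$, Proposition~\ref{semispazides} applies to each $A_n$ (its condition $(i)$ is automatic since $\mu(\R^d_+)<\infty$) and yields $P_{w,V}(A_n)\ge P_{w,V}(R_{c_n})$. Passing to the limit and invoking the continuity established above produces the desired inequality $P_{w,V}(A)\ge P_{w,V}(R_c)$, and existence of a minimizer (namely $R_c$) follows at once.

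The main obstacle is the convergence $P_{w,V}(A_n)\to P_{w,V}(A)$, since $we^V$ need not be globally bounded. This is handled by a cutoff argument outside a large ball $B_R$: the $\mu$-tail satisfies $\mu(\R^d_+\setminus B_R)\to 0$ as $R\to\infty$, and combined with a standard trace inequality this forces the tail of the weighted perimeter measure to vanish uniformly in $n$, so the weak convergence restricted to $B_R$ (where $we^V$ is bounded and continuous) is enough. As an alternative to the approximation, one could redo the divergence-theorem computation of Proposition~\ref{semispazides} directly on the set $A$ of locally finite perimeter with $-e_1 we^V$ as test field, justifying the boundary integration by the same cutoff exploiting the integrability of $e^V$ on $\R^d_+$.
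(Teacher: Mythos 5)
Your argument follows the paper's proof essentially verbatim: approximate $A$ by smooth sets via Maggi's approximation theorem, apply Proposition~\ref{semispazides} to each approximant $A_n$, and pass to the limit; the only cosmetic differences are that the paper argues by contradiction, and that instead of proving a standalone continuity lemma for $c\mapsto P_{w,V}(R_c)$ it controls $P_{w,V}(R_{A_n})\to P_{w,V}(R_A)$ directly through the bound $|P_{w,V}(R_A)-P_{w,V}(R_{A_n})|\le g(0)\,|\mu(A)-\mu(A_n)|$ combined with $\chi_{A_n}\to\chi_A$ in $L^1(\mu)$. The concern you raise about $P_{w,V}(A_n)\to P_{w,V}(A)$ when $we^V$ is unbounded is a genuine subtlety, but the paper itself simply reads this off from the weak-$*$ convergence \eqref{convper} without a tail estimate, so on this point you have, if anything, been more careful than the source.
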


\begin{proof}
Let $A$ be a measurable set of locally finite perimeter and suppose, by contraddiction, that
$P_{w,V}(A)<P_{w,V}(R_A)$. We start by noticing that $P_{w,V}(R_A)<+\infty$, indeed, recalling
\eqref{perimestimate} we have that 
\[
P_{w,V}(R_A)=\int_{R_A}g(x)\,d\mu(x)\le g(0)\mu(A).
\]
% A classical outcome concerning sets of 
% locally finite perimeter asserts that they can be approximated by smooth sets, in particular it holds true that
By \cite[Theorem II.2.8]{M} we can find  a sequence of  sets $A_n$ with smooth boundary such that
$\chi_{A_n}\rightarrow\chi_A$ in $L_{\rm{loc}}^1(\R^d)$ and $|\nu_{A_n}| \rightharpoonup ^* |\nu_A|$, where
$\rightharpoonup ^*$ indicates the weak* convergence of Radon measures.
Since $\mu(\R^d_+)<+\infty$, we also have that 
\begin{equation}\label{convvol}
\chi_{A_n}\rightarrow\chi_A \,\,\,\, \mathrm{in }\, L^1(\R^d, \mu)
\end{equation}
 and, since $we^V$ is a continuous function 
\begin{equation}\label{convper}
we^V|\nu_{A_n}| \rightharpoonup ^* we^V|\nu_A|.
\end{equation}
Thanks to \eqref{convper} we get
\[
P_{w,V}(A)=\lim_{n\to\infty} P_{w,V}(A_n)\ge \lim_{n\to\infty} P_{w,V}(R_{A_n}).
\]
We are left to show that $\lim_{n\to\infty} P_{w,V}(R_{A_n})=P_{w,V}(R_A)$, but
\[
|P_{w,V}(R_A)-P_{w,V}(R_{A_n})|\le g(0)|\mu(A)-\mu(A_n)|,
\]
and we can conclude thanks to \eqref{convvol} and the fact that $\mu(\R^d_+)<+\infty$.
\end{proof}

\section{Main result}\label{main}
% 
% In this section  the functions $ w$ and $V$ will always implicitely satisfy the hypotheses of Proposition
% \ref{semispazides}. Moreover,
% 
In this section we consider sets $E\subseteq\R^d_+$ and we define $d\mu=e^V\,dx$,
$R_E=\{x_1>t_E\}$ where
$t_E\in\R$ is such that $\mu(R_E)=\mu(E)$ and $f^*=f^{*\mu}$ the
right rearrangement of a function $f$ with respect to $\mu$. In what follows we consider
problems of the form
\begin{equation}\label{brasco}
\left\{ \begin{array}{ll}
         -\mathrm{div}(w^2\,e^{V}\nabla u)=f\,e^{V} & \mbox{in $E$}\\
        u=0 & \mbox{on $\pa E$}\end{array} \right.
\end{equation}
which must be intended in weak sense. Precisely,  a solution of \eqref{brasco} is a function $u\in
H^1_0(e^V,w^2e^V,E)$, defined as the space of functions in $L^2(E,e^V)$ whose weak gradients are in 
$L^2(E,w^2e^V)$ which
vanish on the boundary of $E$ in the trace sense\footnote{which is possible since the regularity of
$w$ and $V$ and if $E$ has Lipschitz boundary.}, and which satisfies 
\begin{equation}\label{weakproblem0}
\int_E\langle \nabla u,\nabla \phi\rangle w^2e^V\,dx=\int_E f\,\phi\,e^V\,dx
\end{equation}
 for any $\phi\in H^1_0(e^V,w^2e^V,E)$. \\
The main scope of this section is to prove {\em a priori} estimates for the solutions of problem \eqref{brasco}.
For this reason we shall always consider that a solution $u$ exists. Clearly this requirement depends on the
choice of $w$, $V$ and $f$. General instances of such functions for which the existence of a solution for
problem \eqref{brasco} is guaranteed, can be found in \cite{tru} (see also \cite{bla,cinesi,bbmp3,blafeopos}). Here
we limit
ourselves to state that most of the examples considered in Remark \ref{osservazionciona2}, as the {\em
mixed-Gaussian case}
 $V(x)=-c|x|^2$, $w(t)=b+e^{-at}$ with $a^2- 2c(1 + b) > 0$ and $b$ strictly
positive, are covered by the cases considered in \cite{tru}, whenever $f\in
L^2(E,e^V)$. 

\begin{mt}\label{mainthm}
Suppose that the set $E\subset\R^d_+=\{(x_1,x'):x_1>0\}$ and the functions $w:[0,+\infty]\to(0,+\infty]$  and
$V:\R^d \to\R$ satisfy the hypotheses of Proposition \ref{semispazides}. Consider the two problems
\begin{equation}\label{problemthm}
\left\{ \begin{array}{ll}
         -\mathrm{div}(w^2\,e^{V}\nabla u)=f\,e^{V} & \mbox{in $E$}\\
        u=0 & \mbox{on $\pa E$}\end{array} \right.
\end{equation}
and 
\begin{equation}\label{problemsym}
\left\{ \begin{array}{ll}
         -\mathrm{div}(w^2\,e^{V}\nabla v)=f^{*}e^{V} & \mbox{in $R_E$}\\
        v=0 & \mbox{on $\pa R_E$}\end{array} \right.
\end{equation}
where $0<f\in L^2(\R^d_+,\mu)$.
Then the problem \eqref{problemsym} has as solution the one variable function $v(z)$  given by 
\begin{equation}\label{v}
 v((z,z'))=v(z)=\int_{\mu(\{x_1\ge z\})}^{\mu(R_E)}\frac{1}{h^2(s)}\left(\int_0^sf^*(\xi)\,d\xi\right)\,ds,
\end{equation}
where
\begin{equation}\label{acca}
 h(m)=w(\Phi^{-1}(m))\int_{\R^{d-1}}\mu(\Phi^{-1}(m),x')\,dx',
\end{equation}
being $\Phi(t)=\mu(\{x_1>t\})$.
Moreover, for any solution $u$  of the problem \eqref{problemthm}, we have
% 
% Then there exists a solution $u$ for \eqref{problem} and a solution $v$ for problem \eqref{problemsym} which
% satisfy, for any $x\in\R^d$ 
\begin{equation}\label{tesi1}
 u^*(x)\le v(x),
\end{equation}
and, for any $q\in(0,2]$,  
\begin{equation}\label{tesi2}
 \int_E |\nabla u|^qw^q \,d\mu \le \int_{R_E} |\nabla v|^qw^q \,d\mu
\end{equation}

\end{mt}

% \begin{lem}
%  There exists an unique solution $v\in H^1_0(R_E;w\mu)$ for problem \eqref{problemsym}. Moreover such solution
% satisfies $v=v^{*\mu}$. In particular it depends only on $x_1$.
% \end{lem}
% \begin{proof}
%  The existence and uniqueness of a solution is due to the Lax-Milgram Theorem applied to the Hilbert space
% $H^1(R_E, w\mu)$. Moreover, $v$ is a solution of the problem
% \[
%  \min\left\{\frac12 \int_{R_E}|\nabla u|^2we^V\,dx-\int_{R_E}fuwe^V\,dx:u\in H^1_0(R_E;we^V)\right\}
% \]
% Let us prove that $v^{*we^V}$ is a solution of this problem. In this case, thanks to the uniqueness of
% such solutions, we get the conclusion. Consider a generic function $u$. Then we have that
% \[
%  \frac12 \int_{R_E}|\nabla u|^2we^V\,dx-\int_{R_E}fuwe^V\,dx\ge \frac12 \int_{R_E}|\nabla
% u^{*we^V}|^2we^V\,dx-\int_{R_E}fu^{*we^V}we^V\,dx
% \]
% where we exploited Lemma \ref{ps} and Lemma \ref{hardy}. This  implies that $v^*$ is a solution. We conclude
% since
% such solutions are unique.
% \end{proof}

\begin{proof}
Let us suppose for the moment that the function $v$ given in \eqref{v} is a solution for the problem
\eqref{problemsym}. To prove
\eqref{tesi1} and \eqref{tesi2} we consider the functions $\phi_h$ defined as
\[
\phi_h(x)=\left\{ \begin{array}{ll}
         \sign(u) & \mbox{if $|u|>t+h$}\\
	 \frac{u(x)-t\sign u(x)}{h} & \mbox{if $|u|\in [t,t+h)$}\\
         0 & \mbox{if $|u|<t$},\end{array} \right.
\]
where $0 \leq t< \mathrm{ess\,sup}|u|$ and $h >0$. Notice that, for every $h>0$, $\phi_h$ is an admissible test function, since the solution $u$ belongs to the space $H^1_0(e^V,w^2e^V,E)$. Then \eqref{weakproblem0} turns into
\[
\frac 1 h \int_{\{|u|\in[t,t+h)\}}\langle \nabla u,\nabla u\rangle w^2\,d\mu=\frac 1 h\int_{\{|u|\in[t,t+h)\}} f\,
(u-t\frac{u}{|u|}) d\mu+\int_{\{|u|>t+h\}}f\,\sign(u)\,d\mu.
\]
Taking the limit for $h\to 0$, we get
\begin{equation}\label{stima1}
-\frac{d}{dt}\int_{\{|u|>t\}}|\nabla u|^2w^2\,d\mu=\int_{\{|u|>t\}}f\,d\mu.
\end{equation}
Let us analyze the left-hand side of equation \eqref{stima1}. We claim that the following
inequality holds true for almost every $t$:
\begin{equation}\label{s2}
-\frac{d}{dt}\int_{\{|u|>t\}}|\nabla u|^2w^2\,d\mu\ge\frac{\left(-\frac{d}{dt}
\int_{\{|u|>t\}}|\nabla u|w\,d\mu\right)^2}{-\mu'_u(t)},
\end{equation}
where $\mu_u(t)$ is the distribution function of $u$ introduced in the Section \ref{prerequisiti}.\\
\noindent
Indeed $\mu_u(t)$ is a decreasing function and thence it is derivable for almost every $t$, thanks to the H\"older inequality we get
\[
 \begin{aligned}
  -\frac{d}{dt}\int_{\{|u|>t\}}&|\nabla u|w\,d\mu=\lim_{h\rightarrow 0}\frac 1 h \int_{t<|u|<t+h}|\nabla
u|w\,d\mu\\
&\le\lim_{h\rightarrow 0}\left(\int_{\{t<|u|<t+h\}}|\nabla u|^2w^2\,d\mu\right)^{1/2}\left(\int_{\{t<|u|<t+h\}}\frac{1}{h^2}\,d\mu\right)^{1/2}\\
&=\lim_{h\rightarrow 0}\left(\frac 1 h\int_{\{t<|u|<t+h\}}|\nabla u|^2w^2\,d\mu\right)^{1/2}\left(\frac 1 h\int_{\{t<|u|<t+h\}} 1\,d\mu\right)^{1/2}\\
&=\left(-\frac{d}{dt}\int_{\{|u|>t\}}|\nabla u|^2w^2\,d\mu\right)^{1/2}\left(-\mu_u'(t)\right)^{1/2}
 \end{aligned}
\]
By the Co-Area formula and the fact that $w$ is strictly positive and $C^1$-regular, we easily get that the set
$\{u>t\}$ is a set of locally finite (Euclidean) perimeter. Thus, thanks to Proposition \ref{semispazides} and
Theorem \ref{brasco0} we get
\begin{equation}\label{stima2}
-\frac{d}{dt}\int_{\{|u|>t\}}|\nabla
u|w\,d\mu=\int_{\{|u|=t\}}w\,d\mu=P_{w,V}(\{|u|>t\})\ge P_{w,V}({\{u^*>t\}}).
\end{equation}
We introduce the function
\begin{equation}
 \Phi(t)=\mu(\{x_1>t\}).
\end{equation}
We recall that the weight function $w$ is constant on the boundary of the super level sets of $u^*$, so that the
perimeter of $\{u^*>t\}$ can be written as
\[
 P_{w,V}(\{u^*>t\})=w(\tau)\int_{\R^{d-1}}\mu(\tau,x')\,dx'.
\]
Moreover $\tau\in\R$ satisfies $\mu_{u^*}(t)=\Phi(\tau)$ that is $\tau=\Phi^{-1}(\mu_{u^*}(t))$ (notice
that $\Phi$ is a strictly decreasing function and thus invertible) so that we
can write the previous formula as
\begin{equation}\label{stima3}
 P_{w,V}(\{u^*>t\})=w(\Phi^{-1}(\mu_{u^*}(t)))\int_{\R^{d-1}}\mu(\Phi^{-1}(\mu_{u^*}(t)),x')\,
dx':=h(\mu_{u^*}(t)).
\end{equation}
Plugging \eqref{stima2} in \eqref{s2}, and recalling \eqref{stima3} we get that
\begin{equation}\label{stima4}
-\frac{d}{dt}\int_{\{|u|>t\}}|\nabla u|^2w^2\,d\mu\ge\frac{h(\mu_{u^*}(t))^2}{-\mu_{u^*}'(t)}. 
\end{equation}
We pass now to estimate the right-hand side of \eqref{stima1}: equation \eqref{hardy2} with $A=\{|u|>t\}$ turns
into
\begin{equation}\label{stima5}
\int_{\{|u|>t\}}f\,d\mu\le \int_{\{|u^*|>t\}}f^*\,d\mu=\int_0^{\mu_{u^*}(t)}f^\star(s)\,ds.
\end{equation}
Combining \eqref{stima5} and \eqref{stima4} we get
\begin{equation}\label{stima6}
 \frac{\left(\int_0^{\mu_{u^*}(t)}f^\star(s)\,ds\right)\mu_{u^*}'(t)}{h^2(\mu_{u^*}(t))}\le-1.
\end{equation}
Reasoning analogously for the function $v$, we easily see that, since $v$ is constant on every set $\{x_1=t\}$ and
since $v=v^*$, \eqref{stima6} holds for $v$ as an equality. Consider now the real function
\[
 F(r)=\frac{\int_0^r f(s)\,ds}{h(r)^2},
\]
and let $G$ be a primitive of $F$. Since $F\ge0$, we have that $G$ is increasing. Moreover by
our previous
analysis  we have that
\[
F(\mu_{u^*}(t))\mu_{u^*}'(t)\le-1= F(\mu_v(t))\mu_v'(t).
\]
We recall that here $\mu_{u^*}'(t)$ denotes the derivative almost everywhere of the function $\mu_{u^*}(t)$.
Moreover $t\mapsto G(\mu_{u^*}(t))$ is a monotone non-increasing function which satisfies the chain rule in any
point of differentiability of $\mu_{u^*}$, so that, by \cite[Corollary $3.29$]{amfupa},
we get that 
\begin{equation}\label{palle1}
     G(\mu_{u^*}(t))\le
G(\mu_{u^*}(0))+\int_0^tF(\mu_{u^*}(\tau))\mu_{u^*}'(\tau)\,d\tau.
\end{equation}
On the other hand, being $\mu_v(t)$
an absolutely continuous function (since $v$ is a $C^1$-regular with positive derivative one variable function) we 
have 
\begin{equation}\label{palle2}
G(\mu_{v}(t))=G(\mu_{u^*}(0))+ \int_0^tF(\mu_{v}(\tau))\mu_{v}'(\tau)\,d\tau,
\end{equation}
so that, since $G(\mu_{v}(0))=G(\mu_{u^*}(0))$, we get that $G(\mu_{u^*}(t))\le G(\mu_v(t))$. This implies that
$\mu_{u^*}(t)\le\mu_v(t)$ for any $t$ and
hence that $u^*\le v$, since $u^*$ and $v$ depends only on $x_1$ and are increasing functions of such a variable.
 
% To see that $v$ is a solution of \eqref{problemsym} we only have to notice that such problem is equivalent to the
% equality 
% \begin{equation}
%  \frac{\left(\int_0^{\mu_{u^*}(t)}f^\star(s)\,ds\right)\mu_{u^*}'(t)}{h^2(\mu_{u^*}(t))}=-1.
% \end{equation}
% By integrating such equation between $0$ and $t$, and by the change of variable $s=\mu_v(t)$ in the left hand
% side, and finally setting $v(s)=t$ we get exactly \eqref{v}.

We pass now to the proof of \eqref{tesi2}. Using the H\"older inequality and
reasoning as before we obtain, for  $0< q\le2$,  
\[
\begin{aligned}
-\frac{d}{dt} \int_{\{|u|>t\}}&|\nabla u|^q w^q\,d\mu =\lim_{h\rightarrow 0}\frac 1 h \int_{\{t<|u|<t+h\}}|\nabla
u|^q w^q\,d\mu\\
& \le  \lim_{h\rightarrow 0}\left(\frac 1 h \int_{\{t<|u|<t+h\}}|\nabla u|^2w^2\,d\mu\right)^{q/2}\left(\frac 1 h\int_{\{t<|u|<t+h\}}d\mu\right)^{1-q/2}\\
&=\left(-\frac{d}{dt}\int_{\{|u|>t\}}|\nabla u|^2w^2\,d\mu
\right)^{q/2}(-\mu_u'(t))^{1-q/2}.
\end{aligned}
\]
Recalling \eqref{stima1} and \eqref{stima5} we have
\[
-\frac{d}{dt}\int_{\{|u|>t\}}|\nabla u|^2w^2\,d\mu
\leq \int_0^{\mu_{u^*}(t)}f^*(s)\,ds,
\]
 thus
\begin{equation}\label{stima9}
-\frac{d}{dt} \int_{\{|u|>t\}}|\nabla u|^q w^q\,d\mu\le
\left(\int_0^{\mu_{u^*}(t)}f^\star(s)\,ds\right)^{q/2}(-\mu_u'(t))^{1-q/2}.
\end{equation}
Combining \eqref{stima9} and \eqref{stima6} we finally get
\[
-\frac{d}{dt} \int_{\{|u|>t\}}|\nabla u|^q w^q\,d\mu\le
(-\mu_{u^*}'(t))\left(h(\mu_{u^*}(t))^{-1}\int_0^{\mu_{u^*}(t)}f^\star(s)\,ds\right)^q.
\]
By integrating on both side between $0$ and $+\infty$, we get
\[
\int_E |\nabla u|^q w^q\,d\mu\le\int_0^\infty
(-\mu_{u^*}'(t))\left(h(\mu_{u^*}(t))^{-1}\int_0^{\mu_{u^*}(t)}f^\star(s)\,ds\right)^q dt.
\]
We perform the change of variables $r=\mu_{u^*}(t)$, so that the above equation turns into
\[
\int_E |\nabla u|^q w^q\,d\mu\le \int_0^{\mu(E)} \left(h(r)^{-1}\int_0^{r}f^\star(s)\,ds\right)^q dr.
\]
By a straightforward inspection of those steps we notice that $v$ satisfies 
\[
\int_{R_E} |\nabla v|^q w^q\,d\mu=\int_0^\infty
(-\mu_{v}'(t))\left(h(\mu_{v}(t))^{-1}\int_0^{\mu_{v}(t)}f^\star(s)\,ds\right)^q dt;
\]
By performing the change of variables $r=\mu_v(t)$ we find
\[
\int_{R_E} |\nabla v|^q w^q\,d\mu= \int_0^{\mu(R_E)} \left(h(r)^{-1}\int_0^{r}f^\star(s)\,ds\right)^q dr.
\]
Since $\mu(E)=\mu(R_E)$ we get the desired result.

We are left to prove that the function $v$ given by \eqref{v} is a solution of problem
\eqref{problemsym}. We start by noticing that
% 
% 
% 
% to find a map $v$ satisfying
% \eqref{problemsym} and such that $v=v^*$:
equation \eqref{stima6} suggests how to derive \eqref{v}: indeed, as we
pointed out, any solution $v$ of \eqref{problemsym} such that $v=v^*$ satisfies
\[
 \frac{\int_0^{\mu_{v}(t)}f^\star(s)\,ds}{h^2(\mu_{v}(t))}\mu_{v}'(t)=-1.
\]
By integrating both sides between $0$ and $r$ we obtain
\[
\int_0^r \frac{\int_0^{\mu_{v}(t)}f^\star(s)\,ds}{h^2(\mu_{v}(t))} \mu_{v}'(t)\,dt=-r.
\]
so that, by performing the change of variables $m=\mu_v(t)$, we get
\[
\int_{\mu_v(r)}^{\mu(R_E)}\frac{\int_0^{m}f^\star(s)\,ds}{h^2(m)} dm=r\]
which is equivalent to
\[
v(z,z')=\int_{\mu\{x_1>z\}}^{\mu(R_E)}\frac{\int_0^{m}f^\star(s)\,ds}{h^2(m)} dm,
\]
that is \eqref{v}.
Notice that $v$ is strictly decreasing and belongs to
$C_{\mathrm{loc}}^{1,1}(R_E)$. Indeed, recalling \eqref{acca} one can explicitly compute
\[
\nabla v(z,z')=e_1 \frac{\pa v}{\pa z}(z,z')=-e_1 \frac{\int_0^{\mu\{x_1>z\}}f^\star(s)\,ds}{w^2(z)
\int_{R^{d-1}}e^{V(z,x')}\,dx'},
\]
where $e_1=(1,0,\dots,0)\in\R^d$.
Since $f^\star$ is a decreasing and locally integrable function, then $f^\star\in L_{\mathrm{loc}}^\infty(\R)$;
thus, being $z\mapsto \mu(\{x_1>z\})$ $C^1$-regular, we get that $\int_0^{\mu\{x_1>z\}}f^\star(s)\,ds$ is a
locally Lipschitz 
function.
Moreover the denominator is locally Lipschitz as well, and locally bounded away from zero.
Hence we have that $\nabla v$ is locally Lipschitz.
% 
% Since $w$, $V$ and $\nabla v$ are locally Lipschitz, we have the following equality (in distributional sense)
% \[
% \mathrm{div}\left(w^2e^V\nabla v\right)=\left(\langle\nabla v, w\nabla w\rangle + 
% \Delta v + \langle w\nabla w +\nabla V, \nabla v\rangle\right) e^V.
% \]
% Hence the first equation in \eqref{problemsym} is equivalent to
% \begin{equation}\label{ciao}
% \langle\nabla v, w\nabla w\rangle + \Delta v + \langle w\nabla w +\nabla V, \nabla v\rangle=f^\star.
% \end{equation}
% Recalling that $w\nabla w +\nabla V$ depends only on its first variable by hypotheses we assume on $V$ and $w$.
% Thus to prove equation \eqref{ciao} in weak sense, we have just to test it for 
% functions $\phi\in C_c^1$of the form $\phi (x_1,x')=\phi(x_1)$.
% % Moreover we know by the previous analysis that equation \eqref{problemsym} (and thus equation \eqref{ciao}) holds
% when integrated on the superlevel sets of $v$ that is
% % \[
% \int_{\{v>s\}}\mathrm{div}(w^2\nabla v\, e^V)=\int_{\{v>s\}}f^* e^V.
% \]
% In particular we prove the validity of \
%eqref{problemsym} for every $\phi$ of the form $\phi(x_1,x')=1_{\{s<v<t\}}(x_1)$, since $v$ is increasing and
%continuous with respect to the $x_1$ variable, then the validity is true for every $\phi(x_1,x')=1_{[a,b]}(x_1)$.
% This concludes the proof since the space of the linear combinations of such functions is dense in $C_c^1(\R^+)$.}
Thus, recalling that $\partial_1 V$ depends only on the first variable $x_1$ it is possible to explicitly compute
the divergence of $w^2\nabla v e^V$ and check that it satisfies \eqref{problemsym}. This concludes the proof of
the theorem.
\end{proof}

\section*{Acknowledgements}
B. Ruffini was partially supported by the PRIN 2010-2011 ``Calculus of Variations''. The authors thank L.
Brasco and G. De Philippis for useful discussions on the topic. They are also grateful to the anonymous referee
for several suggestions and remarks.

\end{document}